\theoremstyle{plain}
\newtheorem{thm}{Theorem}
\newtheorem{cor}{Corollary}
\newtheorem{prop}{Proposition}
\newtheorem{conjec}{conjecture}
\newtheorem{defn}{Definition}
\begin{document}

\begin{center}\Large \textbf{Seymour's second neighborhood conjecture for tournaments missing a generalized star}
\end{center}
\begin{center}
Salman GHAZAL\footnote{Department of Mathematics, Faculty of Sciences I, Lebanese University, Hadath, Beirut, Lebanon.\\
                       E-mail: salmanghazal@hotmail.com\\
                       Institute Camille Jordan, Département de Mathématiques, Universit\'{e} Claude Bernard Lyon 1, France.\\
                       E-mail: salman.ghazal@math.univ-lyon1.fr
                       
                       }
\end{center}
\vskip1cm

\begin{abstract}
Seymour's Second Neighborhood Conjecture asserts that every digraph (without digons) has a vertex whose first out-neighborhood
is at most as large as its second out-neighborhood. We prove its weighted version for tournaments missing a generalized star.
As a consequence the weighted version holds for tournaments missing a sun, star, or a complete graph.
\end{abstract}

\begin{section}{Introduction}

\par \hskip.6cm In this paper, graphs are finite and simple. Directed graphs (digraphs) are orientations of graphs, so they do not contain loops,
parallel arcs, or digons (directed cycles of length 2). The neighborhood of a vertex $v$ in a graph $G$ is denoted by $N_G(v)$ and its degree is
$d_G(v)=|N_G(v)|$. Let $D=(V,E)$ denote a digraph with vertex set $V$ and arc set $E$. As usual, $N^{+}_D(v)$ (resp. $N^{-}_D(v)$) denotes the (first)
out-neighborhood (resp. in-neighborhood) of a vertex $v\in V$. $N^{++}_D(v)$ (resp. $N^{--}_D(v)$) denotes the second out-neighborhood 
(in-neighborhood) of $v$, which is the set of vertices that are at distance 2 from $v$ (resp. to $v$). We also denote 
$d^{+}_D(v)=|N^{+}_D(v)|$, $d^{++}_D(v)=|N^{++}_D(v)|$, $d^{-}_D(v)=|N^{-}_D(v)|$ and $d^{--}_D(v)=|N^{--}_D(v)|$. We omit the subscript if the
digraph (resp. graph) is clear from the context. For short, we write $x\rightarrow y$ if the arc $(x,y)\in E$. 
We say that a vertex $v$ has the \emph{second neigborhood property} (SNP) if $d^{+}(v)\leq d^{++}(v)$.\\

\par In 1990, P. Seymour conjectured \cite{dean} the following statement: 

\begin{conjec}
 \textbf{(The Second Neighborhood Conjecture (SNC))}\\ Every digraph has a vertex with the SNP.
\end{conjec}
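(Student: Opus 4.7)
Since the final statement is the unresolved SNC itself, a realistic plan must acknowledge the obstacles rather than claim a complete proof; what follows is the best general template I see, following the pattern of all known partial results. I would proceed by minimum counterexample: fix $D=(V,E)$ violating the SNP at every vertex, with $|V|$ minimum and, subject to that, with the largest number of arcs. The maximality condition forces every non-adjacent pair $\{u,v\}$ of $D$ to be ``symmetric'' (neither vertex dominates the other), since otherwise one could add an arc without creating a digon and contradict maximality. Standard reductions (no sink, no dominated vertex, every $N^+(v)$ ``saturated'' in a suitable sense) can also be extracted from the minimality.

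The main idea is a two-stage reduction to the known tournament case (Dean's conjecture, proved in its weighted form by Fisher and in a cleaner median-order form by Havet and Thomass\'e). First, orient each non-edge of $D$ to obtain a tournament $T$ satisfying $N^+_D(x)\subseteq N^+_T(x)$ for every vertex $x$; the orientation rule could, for instance, direct a non-edge $\{u,v\}$ from the endpoint with smaller current out-degree to the one with larger out-degree, or be chosen greedily to keep future second neighborhoods aligned. Second, apply the Havet--Thomass\'e median-order theorem to pick a vertex $v^*$ (the last vertex of any median order of $T$) with $d^+_T(v^*)\le d^{++}_T(v^*)$, and try to show that $v^*$ is still an SNP witness in $D$. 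This reduces the problem to a counting inequality of the form
\[
|N^{++}_T(v^*)\setminus N^{++}_D(v^*)| \;\le\; |N^+_T(v^*)\setminus N^+_D(v^*)|,
\]
which I would attack by constructing an injection from lost second out-neighbors to lost first out-neighbors: for each $w\in N^{++}_T(v^*)\setminus N^{++}_D(v^*)$, identify a distinguished length-two path $v^*\to u\to w$ in $T$ that must traverse a completion arc, and map $w$ to the endpoint of that arc lying in $N^+_T(v^*)\setminus N^+_D(v^*)$.

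The main obstacle is that this injection is not well-defined in general, and naive versions of it admit small counterexamples. A single missing edge incident to $v^*$ can destroy length-two paths through many intermediate vertices $u$ simultaneously, so the natural map is $k$-to-one and the side counts can diverge. All existing progress on the SNC -- for tournaments (Fisher, Havet--Thomass\'e), for tournaments missing a matching, a sun, or a generalized star as in the present paper -- succeeds precisely because an additional hypothesis on the non-edge graph bounds how many length-two paths a missing edge can destroy, giving enough local control to redeem the injection or to run a corresponding weighted/dual argument. Absent any structural assumption on the non-edge graph, supplying the required global control over aggregate changes in $N^{++}$ is, I believe, the essential unresolved content of the conjecture, and the precise point at which I expect the above template to stall. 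A side investigation worth pursuing in parallel would be a Fisher-style LP-duality formulation on general digraphs, in which one optimizes a probability distribution $w$ on $V$ to minimize $\max_v (w(N^+(v))-w(N^{++}(v)))$; any new idea either for the combinatorial injection or for the dual optimum would constitute genuine progress on the conjecture itself.
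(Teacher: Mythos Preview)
The statement you were asked to prove is Conjecture~1, the full Second Neighborhood Conjecture, which the paper explicitly presents as an \emph{open problem}; the paper does not prove it, and so there is no ``paper's own proof'' to compare against. You correctly recognize this and sensibly decline to claim a complete argument.

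Your proposed template---complete the non-edges to a tournament, take the feed vertex of a (weighted) local median order, and then try to transfer the SNP back to $D$---is precisely the mechanism the paper uses for the partial results it \emph{does} prove (Theorem~2 and its corollaries). The paper's contribution is to isolate exactly when your ``injection'' step goes through automatically: it calls a missing edge $ab$ \emph{good} when it admits an orientation, say $(a,b)$, such that every in-neighbor of $a$ already sees $b$ in $N^+\cup N^{++}$ in $D$. If every missing edge is good, one orients each conveniently, takes the feed vertex $f$, reorients the missing edges at $f$ toward $f$, and then $N^+_D(f)=N^+_{T'}(f)$ and $N^{++}_D(f)=N^{++}_{T'}(f)$ outright---so no injection or counting is needed at all. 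Theorem~3 then characterizes the missing graphs $G$ for which this hypothesis holds universally (generalized stars), which matches your diagnosis that all known progress relies on structural control of the non-edge graph.

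So your write-up is appropriate in spirit, and your identification of the stalling point is accurate; the only refinement the paper would suggest is that the ``good missing edge'' condition is the clean formalization of when the transfer step is lossless, and that beyond generalized stars this condition provably fails (the $C\Rightarrow B$ direction of Theorem~3), confirming that your template cannot be pushed further without a genuinely new idea.
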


Seymour's conjecture restricted to tournaments is known as Dean's conjecture \cite{dean}. In 1996, Fisher \cite{fisher}
proved Dean's conjecture, thus asserting the SNC for tournaments. Another proof of Dean's conjecture was given by Thomassé
and Havet \cite{m.o.}, in 2000, using a tool called (local) median order. This tool turned out to be useful not only for the proof of SNC for 
tournaments but also for Sumner's conjecture \cite{sumn} ( see \cite{m.o.}, \cite{sahi}). Yuster and Fidler \cite{fidler}, in 2007, also used
(weighted) local median order to prove the SNC for tournaments missing the edges of a complete graph and tournaments missing a matching.\\

\par Chen, Shen and Yuster \cite{shen} proved that every digraph contains a vertex $v$ such that $d^{+}(v)\leq \gamma d^{++}(v)$, 
 where $\gamma = 0.657298...$ is the unique real root of the equation $2x^3+x^2-1=0$. In addition, Kaneko and Locke \cite{kaneko} proved the
SNC for digraphs with minimum out-degree at most 6.\\

\par For completeness, we introduce the following related conjecture, which was proposed in 1978 by Caccetta and H\"{a}ggkvist \cite{cachag}.

\begin{conjec}
 If D is a digraph with minimum out-degree at least $|V(D)|$/k, then D has a directed cycle of length at most k. 
\end{conjec}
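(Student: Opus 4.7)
The Caccetta--Häggkvist conjecture is a famous open problem from 1978, so an honest ``plan'' can really only target a partial result or a special case. The most natural first attempt is a neighborhood-counting argument. Let $D$ be a digraph on $n$ vertices with minimum out-degree $\delta \geq n/k$, and suppose for contradiction that $D$ has no directed cycle of length at most $k$. Pick any vertex $v$ and iterate its out-neighborhoods $N^{+}(v), N^{++}(v), \dots, N^{(k)+}(v)$. The absence of short cycles should force these sets to be pairwise disjoint and disjoint from $\{v\}$, so their union has size at most $n-1$. If one could further show that each of them has size at least $n/k$, summation would give at least $n$, a contradiction.

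The real obstacle is exactly this lower bound on iterated neighborhoods. Even granted $d^{+}(v)\geq n/k$, the set $N^{++}(v)$ may be much smaller than $N^{+}(v)$, because many arcs leaving $N^{+}(v)$ can return to $v$ or stay inside $N^{+}(v)$. Controlling this requires either a clever choice of $v$ --- for instance the last vertex of a local median order, in the spirit of Havet--Thomass\'{e} and the techniques surveyed above --- or an averaging argument over all vertices. Such tools have yielded partial results of the form $\delta \geq \alpha_{k}\, n$ with $\alpha_{k}$ strictly larger than $1/k$, but never the sharp constant $1/k$ in full generality.

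A realistic plan, then, is to attack a restricted class first. For tournaments, the SNP at the last vertex $v_{n}$ of a local median order gives $d^{++}(v_{n})\geq d^{+}(v_{n})\geq n/k$, and the tournament structure forces enough arcs among $N^{+}(v_{n})\cup N^{++}(v_{n})$ to locate a short cycle for small $k$. One would then try to transport this reasoning from tournaments to tournaments missing a sparse graph --- a matching, a star, or a generalized star --- along the lines of Fidler--Yuster and the present paper, by showing that the missing edges can only displace a controlled number of ``forward'' arcs. The main difficulty I expect is that beyond tournaments the median-order machinery loses its forward-majority property, and without a suitable weighted replacement the iterated out-neighborhoods can simply fail to grow, which is precisely why the general conjecture has resisted attack for over four decades.
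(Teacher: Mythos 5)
The statement you were given is Conjecture 2 of the paper, the Caccetta--H\"{a}ggkvist conjecture, which the paper states explicitly as an open problem (included ``for completeness'') and does not prove; there is therefore no proof in the paper to compare against. Your proposal correctly recognizes this, and your diagnosis of where the naive iterated-out-neighborhood count breaks down --- the distance classes $N^{+}(v), N^{++}(v), \dots$ are disjoint for free, but nothing guarantees each has size at least $n/k$ --- is accurate and is indeed the standard obstruction. No gap to report beyond the obvious one: neither you nor the paper proves the statement, because no proof is known.
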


SNC, if true, will establish an important special case ($k=3$) of conjecture $2$, which is still open.\\

\par Let $D=(V,E)$ be a digraph (vertex) weighted by a non-negative real valued function \hskip.1cm$\omega :V\rightarrow \mathcal{R_+}$. The weight
of an arc $(x,y)$ is the weight of its head $y$. The weight of a set of vertices (resp. edges) is the sum of the weights of its members.
We say that a vertex $v$ has the weighted SNP if $\omega(N^+(v))\leq \omega(N^{++}(v))$. It is known that the SNC is equivalent to its weighted
version:\emph{ Every weighted digraph has a vertex with the weighted SNP}.\\

\par A weighted median order $L=v_1v_2...v_n$ of a weighted digraph $(D,\omega)$ is an order of the vertices of $D$ the maximizes
the weight of the set of forward arcs of $D$, i.e., the set $\{(v_i,v_j)\in E; i<j\}$.
In fact, $L$ satisfies the feedback property: For all $1\leq i\leq j\leq n:$
$$ \omega ( N^{+}_{[i,j]}(v_i) ) \geq  \omega ( N^{-}_{[i,j]}(v_i) ) $$
and 
$$ \omega ( N^{-}_{[i,j]}(v_j) ) \geq  \omega ( N^{+}_{[i,j]}(v_j) ) $$
where $[i,j]:=D[v_i,v_{i+1}, ...,v_j]$.\\

\par An order $L=v_1v_2...v_n$ satisfying the feedback property is called weighted local median order. When $\omega = 1$, we obtain
     the defintion of (local) median orders of a digraph (\cite{m.o.}, \cite{fidler}). The last vertex $v_n$
     of a weighted local median order $L=v_1v_2...v_n$ of $(D,\omega)$ is called a \emph{feed} vertex of the weighted digraph 
     $(D,\omega)$.

\begin{thm}\cite{m.o.}
 Every feed vertex of a tournament has the SNP.
\end{thm}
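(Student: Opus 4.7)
The plan is to induct on $n - i$, strengthening the theorem to the claim that for every $1 \le i \le n$, the vertex $v_n$ has the SNP in the tail subtournament $T_i := T[v_i, v_{i+1}, \ldots, v_n]$ equipped with the order inherited from $\sigma = v_1 v_2 \ldots v_n$. That inherited order is still a local median order of $T_i$, since every interval of it is also an interval of $\sigma$ and thus satisfies the feedback condition. Specializing to $i = 1$ then recovers the theorem.

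The base case $i = n$ is vacuous. For the inductive step, I would assume the claim for $T_{i+1}$ and deduce it for $T_i$ by case analysis on the arc between $v_i$ and $v_n$. If $v_i \to v_n$, then $N^+_{T_i}(v_n) = N^+_{T_{i+1}}(v_n)$ while $N^{++}_{T_i}(v_n) \supseteq N^{++}_{T_{i+1}}(v_n)$, because every length-two path in $T_{i+1}$ persists in $T_i$ and $v_n$ acquires no new direct out-neighbor; hence the inequality transfers. If $v_n \to v_i$, then $v_i$ joins $N^+_{T_i}(v_n)$ as a new first-out-neighbor, and a matching new second-out-neighbor must be produced. The feedback property applied to $[v_i, v_n]$, together with $v_n \in N^-(v_i)$ contributing to the in-side, forces $v_i$ to have at least one out-neighbor $v_\ell$ with $i < \ell < n$. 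Any such $v_\ell$ lying in $N^-(v_n)$ yields a second-out-neighbor of $v_n$ via the path $v_n \to v_i \to v_\ell$.

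The main obstacle is that the vertex $v_\ell$ above may fail to lie in $N^-(v_n)$ or, even if it does, may already belong to $N^{++}_{T_{i+1}}(v_n)$ and therefore fail to enlarge the second-out-neighborhood. To address this, I would strengthen the inductive invariant from a mere cardinality inequality to an explicit injection $\phi_{i+1} : N^+_{T_{i+1}}(v_n) \hookrightarrow N^{++}_{T_{i+1}}(v_n)$ carrying additional structural data (for instance, $v_j \to \phi_{i+1}(v_j)$ and $\phi_{i+1}(v_j)$ lying strictly to the right of $v_j$ in $\sigma$). Extending $\phi_{i+1}$ to $\phi_i$ either appends a target freshly supplied by the feedback inequality at $[v_i, v_n]$, or else triggers a reassignment along an alternating path in the bipartite ``domination'' graph $G$ whose edges are the pairs $(v_j, v_m)$ with $v_j \in N^+(v_n)$, $v_m \in N^-(v_n)$, and $v_j \to v_m$. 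The feedback inequalities applied to the relevant subintervals supply precisely the surplus of out-neighbors over in-neighbors needed for such a Hall-type reassignment to terminate, and the accompanying bookkeeping is the crux of the argument.
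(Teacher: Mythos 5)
Your reduction to tail subtournaments is sound (a suffix of an order with the feedback property again has the feedback property), and the easy case $v_i\rightarrow v_n$ is handled correctly. But the entire difficulty of the theorem lives in the other case, and there your proposal stops being a proof and becomes a plan. Having observed that the out-neighbor $v_\ell$ of $v_i$ supplied by the feedback property may lie in $N^+(v_n)$ or may already be a used target, you propose to strengthen the invariant to an injection $\phi$ with $v_j\rightarrow\phi(v_j)$ and to repair it by ``a Hall-type reassignment along alternating paths,'' asserting that the feedback inequalities ``supply precisely the surplus needed.'' That assertion is exactly what has to be proved, and you explicitly defer it (``the accompanying bookkeeping is the crux of the argument''). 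Concretely: an injection satisfying $v_j\rightarrow\phi(v_j)$ is a matching saturating $N^+(v_n)$ in the domination bipartite graph, which by Hall's theorem requires $\bigl|\bigcup_{a\in S}\bigl(N^+(a)\cap N^-(v_n)\bigr)\bigr|\geq |S|$ for \emph{every} subset $S\subseteq N^+(v_n)$. The feedback property only gives inequalities for intervals of consecutive vertices, so it says nothing direct about an arbitrary subset $S$ of out-neighbors of $v_n$; no argument is offered for why the condition holds, nor for why the alternating-path process terminates with a valid extension. The invariant you chose is also strictly stronger than the theorem itself, so you may well have made the problem harder rather than easier.

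For comparison, the proof of Havet and Thomass\'e in \cite{m.o.} avoids this matching problem by a different strengthening. Call $v_j$ \emph{good} if $v_j\in N^+(v_n)$ or some $v_k\in N^+(v_n)$ with $k<j$ satisfies $v_k\rightarrow v_j$; every good in-neighbor of $v_n$ lies in $N^{++}(v_n)$, and one proves that the good in-neighbors alone already number at least $d^+(v_n)$. This only requires lower-bounding the size of a union of out-neighborhoods taken over \emph{initial segments} of $N^+(v_n)$ in the order, which is precisely the kind of information the feedback property on intervals $[i,n]$ provides; no Hall condition over arbitrary subsets, and no reassignment procedure, is needed. Your skeleton (peeling vertices off the left end) matches theirs, but the missing step is the theorem's actual content, so as it stands the proposal is incomplete.
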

Following the proof in \cite{m.o.}, the weighted version of the previous statement is proved in \cite{fidler}.
\begin{prop}
Every feed vertex of a weighted tournament has the weighted SNP.
\end{prop}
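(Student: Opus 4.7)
The plan is to adapt the unweighted proof of Theorem 1 (due to Thomass\'e and Havet) to the weighted setting, as claimed in \cite{fidler}. Fix a weighted local median order $L=v_1 v_2 \ldots v_n$ of $(T,\omega)$ with feed vertex $v_n$, and write $A=N^{+}(v_n)$; the goal is $\omega(A)\leq \omega(N^{++}(v_n))$. I would proceed by induction on $n$, the base case $A=\emptyset$ being trivial. For the inductive step, let $v_i$ be the rightmost vertex of $A$ in $L$; the maximality of $i$ then forces $v_j\to v_n$ for every $j$ with $i<j<n$.

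The first key step is to invoke the weighted feedback property on the interval $[i,n]$. Since $v_n\to v_i$, we have $v_n\in N^{-}_{[i,n]}(v_i)$, and therefore
\[
\omega\bigl(N^{+}_{[i,n]}(v_i)\bigr)\;\geq\;\omega\bigl(N^{-}_{[i,n]}(v_i)\bigr)\;\geq\;\omega(v_n).
\]
Any $v_j \in N^{+}_{[i,n]}(v_i)$ satisfies $i<j<n$, so by the rightmost choice of $v_i$ we have $v_j\to v_n$; combined with $v_n\to v_i\to v_j$ this certifies $v_j\in N^{++}(v_n)$. Thus $N^{+}_{[i,n]}(v_i)\subseteq N^{++}(v_n)$, contributing at least $\omega(v_n)$ worth of second-neighborhood weight attributable to $v_i$.

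The second step is to strip $v_i$ and its associated second-neighborhood contribution from the problem, obtain a smaller weighted tournament carrying a weighted local median order still ending at $v_n$, and invoke the induction hypothesis on $A\setminus\{v_i\}$. Since deleting the interior vertex $v_i$ may violate feedback on the intervals that originally contained it, one first repairs the order by a sedimentation procedure: a sequence of adjacent swaps that do not decrease the weight of forward arcs, and hence preserve the local median property, until $v_i$ is in a position where its removal is harmless. Equivalently, one can process the elements of $A$ from right to left, at each step applying the feedback inequality on the appropriate tail interval.

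The main obstacle is the disjointness and covering bookkeeping. One must ensure that the chunks of $N^{++}(v_n)$-weight assigned to different elements of $A$ neither overlap nor undershoot, so that their total reaches $\omega(A)$ rather than merely $|A|\cdot\omega(v_n)$. In the unweighted proof a single fresh witness per element of $A$ suffices, but in the weighted case the feedback inequality must be applied on carefully chosen intervals, with the weight already allocated subtracted out, so that enough new second-neighborhood weight accrues at each inductive step. Verifying that the weighted feedback inequality consistently supplies this fresh weight is the heart of the adaptation, and it proceeds in the spirit of the weighted median-order arguments of \cite{fidler}.
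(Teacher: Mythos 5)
The paper gives no proof of this proposition: it is quoted from \cite{fidler}, where it is obtained by rerunning the Havet--Thomass\'e argument of \cite{m.o.} with weights. So your attempt has to be judged against that argument, and as written it is a plan rather than a proof. Your opening moves are the right ones (induction, the rightmost out-neighbour $v_i$ of the feed vertex, the feedback inequality on $[i,n]$, and the correct observation that $N^{+}_{[i,n]}(v_i)\subseteq N^{++}(v_n)$ because every vertex strictly between $v_i$ and $v_n$ beats $v_n$). But the only concrete estimate you extract from the feedback property is $\omega(N^{+}_{[i,n]}(v_i))\geq\omega(v_n)$, i.e.\ a parcel of second-neighbourhood weight bounded below by the weight of the \emph{feed vertex}. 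That quantity is irrelevant to the target inequality $\omega(N^{++}(v_n))\geq\omega(A)$: what must be produced, in aggregate over the elements of $A$, is $\omega(A)$ itself, and nothing in your argument ever makes the weights $\omega(v_i)$ of the out-neighbours appear on the second-neighbourhood side. Your closing paragraph concedes exactly this (``their total reaches $\omega(A)$ rather than merely $|A|\cdot\omega(v_n)$'') and then defers the resolution as ``the heart of the adaptation.'' Naming the heart of the proof and declaring that it ``proceeds in the spirit of'' \cite{fidler} is where the proof is missing, not where it ends.

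Two further points would need genuine work even if the accounting were set up correctly. First, the inductive step as you describe it deletes the \emph{interior} vertex $v_i$; this really does destroy the feedback property on every interval of $L$ that straddles position $i$, and the ``sedimentation procedure'' you invoke to repair it is only named. You would have to specify which adjacent transpositions are performed, why they preserve the local feedback property on \emph{all} intervals (for a local median order this is not automatic from ``the weight of forward arcs does not decrease''), why the process terminates with $v_i$ in a removable position, and why $N^{+}(v_n)$ and $N^{++}(v_n)$ are controlled throughout. Second, the disjointness of the parcels of $N^{++}(v_n)$-weight assigned to distinct elements of $A$ is asserted as an ``obstacle'' to be overcome, not established; since the tail intervals $[i,n]$ for different $i\in A$ are nested, their out-neighbour sets $N^{+}_{[i,n]}(v_i)$ can overlap arbitrarily, and the feedback inequalities must be combined with a subtraction scheme that you do not supply. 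Until these steps are carried out, the proposition is not proved; I would either reproduce the weighted argument of \cite{fidler} in full or cite it, as the paper does.
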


\par The above weighted version was used in \cite{fidler} to prove the SNC for tournaments missing a matching.
In the next section, we will introduce the definition of n-generalized star and characterize it, to prove the weighted
version of SNC for tournaments missing generalized star, thus generlizing a result of \cite{fidler}. In fact, we prove a more general statement
(Theorem 2). As corollaries, the weighted SNC holds for tournaments missing a sun, star or a complete graph.

\end{section}

\begin{section}{Main Results}

Let $D=(V,E)$ be a digraph. For 2 vertices $x$ and $y$, we call $xy$ a missing edge if $(x,y) \notin E$ and
$(y,x)\notin E$. The missing graph $G$ of $D$ is the graph formed by the missing edges, formally, $E(G)$ is the set
of all the missing edge and $V(G)$ is the set of non whole vertices (vertices incident to some missing edges).

\begin{defn}
A missing edge $ab$ is called \emph{good} if:\\
$(i)$   $(\forall v \in V\backslash\{a,b\})[(v\rightarrow a)\Rightarrow(b\in N^{+}(v)\cup N^{++}(v))]$ or\\
$(ii)$ $(\forall v \in V\backslash\{a,b\})[(v\rightarrow b)\Rightarrow(a\in N^{+}(v)\cup N^{++}(v))]$.\\
If $ab$ satisfies $(i)$ we say that $(a,b)$ is a convenient orientation of $ab$.\\
If $ab$ satisfies $(ii)$ we say that $(b,a)$ is a convenient orientation of $ab$.
\end{defn}

The definition of good missing edges is inspired from \cite{fidler} (subsection 3.1).

\begin{thm}

Let $(D,\omega)$ be a weighted digraph. If all the missing edges of $D$ are good then it has a vertex with the weighted SNP.

\end{thm}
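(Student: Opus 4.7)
The plan is to reduce the statement to Proposition~1 via the convenient orientations guaranteed by the good hypothesis. First, for each missing edge $ab$ of $D$, pick a convenient orientation (Definition~1, at least one exists since $ab$ is good) and add the corresponding arc; the resulting digraph $T$ is a tournament on the vertex set $V$, still weighted by $\omega$. Second, take a weighted local median order $L=v_1\cdots v_n$ of $(T,\omega)$ and set $v:=v_n$; Proposition~1 applied to $T$ yields
\[
\omega(N_T^+(v))\leq \omega(N_T^{++}(v)).
\]
Third, transfer this inequality back to $(D,\omega)$.

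Write $N_T^+(v)=N_D^+(v)\sqcup P$, where $P$ is the set of vertices $w$ such that $vw$ is a missing edge of $D$ oriented $(v,w)$ in $T$; hence $\omega(N_T^+(v))=\omega(N_D^+(v))+\omega(P)$. The inequality I aim to establish is
\[
\omega(N_T^{++}(v))\leq \omega(N_D^{++}(v))+\omega(P),
\]
from which $\omega(N_D^+(v))\leq \omega(N_D^{++}(v))$ follows by combining with the feed inequality above. For each $u\in N_T^{++}(v)$, fix a witness $w\in N_T^+(v)$ with $w\to u$ in $T$. When $w\in N_D^+(v)$, the good hypothesis forces $u\in N_D^{++}(v)$: either $w\to u$ is an arc of $D$, yielding the $D$-path $v\to w\to u$, or $wu$ is a missing edge oriented $(w,u)$ in $T$, in which case the convenient-orientation condition applied with $v$ as an in-neighbor of $w$ in $D$ gives $u\in N_D^+(v)\cup N_D^{++}(v)$, and hence $u\in N_D^{++}(v)$ since $u\notin N_D^+(v)$.

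The residual case $w\in P$ is the crux. Here $vw$ is missing, and the convenient orientation of $vw$ only controls the behaviour of $D$-in-neighbors of $v$, not the outgoing $2$-paths $v\to\cdot\to u$. To handle these ``bad'' $u$'s, I would set up a charging argument pairing each bad $u$ with an element of $P$ so that the total weight of bad $u$'s is at most $\omega(P)$. The pairing should exploit the interval feedback inequalities of the local median order $L$ restricted to suffixes ending at $v$, in the spirit of the Havet-Thomass\'{e} analysis underlying Proposition~1. I expect this global charging step to be the main obstacle, since the pointwise good-edge conditions alone seem insufficient to control the doubly-missing witnesses.
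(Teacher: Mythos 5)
Your reduction stalls exactly at the point you yourself flag, and this is a genuine gap rather than a deferrable technicality. For a feed vertex $v$ with a missing edge $vw$ whose convenient orientation is $(v,w)$ (so $w\in P$), the good-edge condition for $vw$ only constrains vertices that are \emph{in-neighbours} of $v$ or of $w$ in $D$; it says nothing about where a $2$-path $v\to w\to u$ of $T$ lands relative to $D$. Such a $u$ need not lie in $N_D^{+}(v)\cup N_D^{++}(v)$ at all, so the inequality $\omega(N_T^{++}(v))\leq \omega(N_D^{++}(v))+\omega(P)$ that you aim for has no pointwise justification, and a weight-charging scheme would have to injectively assign each bad $u$ to some $w\in P$ with $\omega(u)\leq\omega(w)$ --- there is no reason for such an assignment to exist for arbitrary non-negative weights, and the feedback inequalities of $L$ do not obviously supply one. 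So the residual case is not a loose end to be patched by a Havet--Thomass\'{e}-style global argument; it is the wrong place to fight.

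The paper's proof sidesteps the case entirely with one extra move that your proposal is missing: after choosing the weighted local median order $L$ of $(T,\omega)$ with feed vertex $f$, reorient every missing edge incident to $f$ \emph{towards} $f$, obtaining a tournament $T'$. This reorientation only moves $f$ from out- to in-neighbourhoods on intervals $[i,n]$, so both feedback inequalities are preserved (indeed improved) and $L$ remains a weighted local median order of $(T',\omega)$; Proposition 1 then gives $f$ the weighted SNP in $T'$. Now $N^{+}_{T'}(f)=N^{+}_D(f)$, i.e.\ your set $P$ is empty by construction, and the only remaining case is exactly the one you already handle correctly: for $f\to u\to z$ in $T'$ the vertex $u$ lies in $N_D^{+}(f)$, and $(u,z)$ is either an arc of $D$ or a convenient orientation of the missing edge $uz$ with $f\to u$ in $D$, forcing $z\in N_D^{+}(f)\cup N_D^{++}(f)$. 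Hence $N^{++}_{T'}(f)=N^{++}_D(f)$ and the weighted SNP transfers to $D$. In short: your first two steps and your treatment of witnesses $w\in N_D^{+}(v)$ match the paper, but the proof needs the reorientation of the missing edges at the feed vertex, not a charging argument.
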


\begin{proof}
We give every missing edge a convenient orientation and add it to $D$. The obtained digraph is a tournament $T$.
Consider a weighted local median order $L$ of $(T,\omega)$ and let $f$ denote its feed vertex. We modify $T$ by reorienting all the missing edges
incident to $f$ towards $f$, if any exists. Let $T'$ denote the new obtained tournament. $L$ is also a weighted local median order of $(T',\omega)$.
We have that $f$ has the weighted SNP in $T'$, by proposition 1. Note that $N^+(f)=N^+_{T'}(f)$. Suppose $f\rightarrow u\rightarrow v$ in $T'$.
Either $(u,v)\in E(D$) or a convenient orientation. Thus $v\in N^{+}(f)\cup N^{++}(f)$. Whence, $N^{++}(f)=N^{++}_{T'}(f)$. Therefore $f$ has the
weighted SNP in $(D,\omega)$ as well.
\end{proof}

\begin{defn}
An n-generalized star $G_n$ is a graph defined as follows:\\
\begin{description}
\item[1) ] $V(G_n)=\displaystyle\bigcup_{i=1}^{n}(X_i\cup A_{i-1})$, where the $A_i$'s and $X_i$'s are pairwise disjoint sets
\item[2) ] $G_n[\displaystyle\bigcup_{i=1}^{n}X_i]$ is a complete graph and $X_i$'s are nonempty
\item[3) ] $\displaystyle\bigcup_{i=1}^{n}A_{i-1}$ is a stable set and $A_i$ is nonempty for all $i>0$
\item[4) ] $N(A_0)=\phi$ and for all $i>0$, for all $a\in A_i$, $N(a)=\displaystyle\bigcup_{1\leq j\leq i}X_j$.
\end{description}
\end{defn}

A \emph{sun} $G$ is a graph formed of a complete graph $T$ and a stable set $S$ such that for every $s\in S$  we have $N(s)=V(T)$.
Clearly, $G$ is a 2-generalized star or a 1-generalized star. If $V(T)$ is a singleton then $G$ is a star and if $S$ is empty then $G$ is
a complete graph.\\

Recall that a sqaure is a cycle of length 4. The following theorem shows 2 characterizations of generalized stars.
The first one is structural, while the second characterizes them when they are considerd as missing graphs of digraphs.
\newpage
\begin{thm}
Let G be a simple graph. The following are equivalent:
\begin{description}
  \item[(A)] G is a generalized star.
  \item[(B)] Any two nonadjacent edges of G do not induce a subgraph of square.
  \item[(C)] All the missing edges of every digraph whose missing graph is G are good missing edges.
\end{description}
\end{thm}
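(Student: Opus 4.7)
The plan is to prove the four implications (A)$\Rightarrow$(B), (B)$\Rightarrow$(C), (C)$\Rightarrow$(B) and (B)$\Rightarrow$(A); the first three are local four-vertex arguments and the last is a structural induction.

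For (A)$\Rightarrow$(B), I would take any two disjoint edges $e_1, e_2$ of $G_n$ and split on how many endpoints lie in $\bigcup A_i$. If both edges lie in the clique $\bigcup X_i$, the four endpoints span $K_4$; if exactly one edge has an $A$-endpoint, the other three endpoints lie in $\bigcup X_j$ and span a triangle; and if both edges have $A$-endpoints, writing $e_1 = a_1 x_1$ with $a_1 \in A_{i_1}$, $x_1 \in X_{j_1}$ and $e_2 = a_2 x_2$ with $a_2 \in A_{i_2}$, $x_2 \in X_{j_2}$ and $i_1 \le i_2$, the inequality $j_1 \le i_1 \le i_2$ forces $a_2 x_1 \in E(G)$ and yields the triangle $x_1 x_2 a_2$. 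Every case produces a triangle, so the induced four-vertex subgraph cannot be a subgraph of $C_4$.

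For (B)$\Rightarrow$(C), suppose some missing edge $ab$ of a digraph $D$ with missing graph $G$ is not good and pick witnesses $v_1, v_2 \neq a, b$ for the failure of (i) and (ii). Then $v_1 = v_2$ contradicts $b \notin N^{+}(v_1)$, $v_1 \to v_2$ places $b$ in $N^{++}(v_1)$, and $v_2 \to v_1$ places $a$ in $N^{++}(v_2)$; hence $v_1 v_2$ is also a missing edge and $\{a,b,v_1,v_2\}$ carries the disjoint missing edges $ab, v_1 v_2$. Since $v_1 \to a, v_2 \to b$ exclude $v_1 a, v_2 b$ from $E(G)$, and each of $v_1 b, v_2 a$ is either in $E(G)$ or oriented as $b \to v_1, a \to v_2$ (otherwise $b$ or $a$ would be a first out-neighbor), the four resulting cases give $G[\{a,b,v_1,v_2\}]$ equal to $2K_2$, $P_4$, $P_4$, or $C_4$, all subgraphs of $C_4$ and hence contradicting (B). The converse (C)$\Rightarrow$(B) is the contrapositive: given disjoint edges $ab, cd$ of $G$ with $G[\{a,b,c,d\}]$ a subgraph of $C_4$, a short case check shows that we may choose distinct $v_1, v_2 \in \{c,d\}$ with $v_1 a, v_2 b \notin E(G)$; then build $D$ with missing graph $G$ by orienting the non-missing edges among $\{a,b,c,d\}$ along the directed four-cycle $v_1 \to a \to v_2 \to b \to v_1$ (dropping the arcs whose underlying edges happen to be missing) and orienting every non-missing edge from $V(G) \setminus \{a,b,c,d\}$ into $\{a,b,c,d\}$. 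A direct calculation gives $N^{+}(v_1) \subseteq \{a\}$ and $N^{+}(a) \subseteq \{v_2\}$, so $b \notin N^{+}(v_1) \cup N^{++}(v_1)$; similarly $N^{+}(v_2) \subseteq \{b\}$ and $a \notin N^{+}(b)$ give $a \notin N^{+}(v_2) \cup N^{++}(v_2)$; both (i) and (ii) fail, so $ab$ is not good.

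Finally, (B)$\Rightarrow$(A) is by induction on $|V(G)|$. Condition (B) is precisely the exclusion of induced $2K_2$, $P_4$, and $C_4$, which is the classical forbidden-subgraph characterization of \emph{threshold graphs}, and implies (by a direct argument starting from a maximum-degree vertex, one of its non-neighbors, and the 4-vertex subgraphs a putative universal vertex of $G-v$ would span) that any such nonempty $G$ has an isolated or universal vertex. If $G$ has an isolated vertex $v$, apply induction to $G - v$ and append $v$ to $A_0$. If $G$ has a universal vertex $v$ and no isolated vertex, apply induction to $G - v$; if the $A_0$ of $G - v$ is empty, absorb $v$ into the existing $X_1$, and otherwise shift all indices up by one, setting $X_1^{\mathrm{new}} = \{v\}$, $X_{i+1}^{\mathrm{new}} = X_i^{\mathrm{old}}$, $A_{i+1}^{\mathrm{new}} = A_i^{\mathrm{old}}$ so that the old $A_0$ becomes the new $A_1$. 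Verifying the defining conditions of Definition 2 in each branch closes the induction. The main obstacle is this last implication: the other three are purely local four-vertex exclusions, whereas (B)$\Rightarrow$(A) requires reconstructing the entire layered structure, and the split between ``absorb $v$ into $X_1$'' and ``create a new $X_1 = \{v\}$'' reflects the genuine non-uniqueness of the generalized-star decomposition.
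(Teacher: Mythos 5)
Your four implications are all correct, and three of them—(A)$\Rightarrow$(B), (B)$\Rightarrow$(C), (C)$\Rightarrow$(B)—follow essentially the same local four-vertex arguments as the paper (you actually spell out (A)$\Rightarrow$(B) and the digraph construction for (C)$\Rightarrow$(B) in more detail than the paper does; your ``orient everything from outside into $\{a,b,c,d\}$ and put a directed $4$-cycle inside'' is a mild variant of the paper's ``orient everything into $u$ and $v$ except $u\to x$ and $v\to y$,'' and both work). The genuine divergence is in (B)$\Rightarrow$(A). The paper argues directly and non-inductively: it takes a maximum stable set $S$, shows $V(G)\setminus S$ is a clique using the maximality of $S$ together with (B), shows the neighborhoods of vertices of $S$ are nested ($d(a)\le d(b)$ forces $N(a)\subseteq N(b)$, else $axyb$ is an induced $P_4$), and then reads off the $A_i$ as the degree classes of $S$ and the $X_i$ as the successive differences of their neighborhoods. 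You instead observe that (B) is exactly the exclusion of induced $2K_2$, $P_4$ and $C_4$ --- i.e.\ the Chv\'atal--Hammer characterization of threshold graphs --- and run the standard vertex-elimination induction, peeling off an isolated or dominating vertex and carefully handling the ``absorb into $X_1$'' versus ``create a new $X_1=\{v\}$'' split; this identifies generalized stars with threshold graphs, which the paper never states and which is arguably the more illuminating takeaway. The one soft spot in your write-up is that the key lemma of that induction --- every nonempty graph satisfying (B) has an isolated or dominating vertex --- is only gestured at (``a direct argument starting from a maximum-degree vertex\dots''); it is a classical fact and the max-degree/nested-neighborhood argument does go through, but as written it is a sketch rather than a proof, whereas the paper's stable-set construction is self-contained. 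Both approaches are sound; the paper's is shorter once the clique and nesting claims are in hand, yours is longer but places the result in a recognized structural context.
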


\begin{proof}

$A\Rightarrow B$: By the definition of a generalized star.\\

$B\Rightarrow A$: By setting $A_0$ the set of isolated vertices, we may assume that $G$ has no isolated vertices. Let $S$ be a stable set
in $G$ with the maximum size. Set $T=V(G)-S$. We show that $T$ is a clique. By the maximality of $S$, every element of $T$ has a neighbor
in $S$. Suppose $x,y \in T$. If $(N(x)\bigcup N(y))\cap S=\{a\}$ then $xy\in E(G)$, since otherwise the stable set $S\bigcup\{x,y\}-\{a\}$ is
lager than $S$ which is a contradiction. Otherwise, there's distinct vertices $a,\,b\in S$ such that $ax$ and $by$ are in $E(G)$. By hypothesis
these two edges do not induce a subgraph of a square, then at least one of them has at least an endpoint which is adjacent to the endpoints of
the other. Assume, without loss of generality this edge is $ax$. Since $S$ is stable, $x$
is the endpoint which is adjacent to $b$ and $y$. In particular, $xy\in E(G)$. Thus $T$ is a clique.\\
Suppose $a,b\in S$ with $d(a)\leq d(b)$. We prove $N(a)\subseteq N(b)$. Suppose there is $x\in N(a)-N(b)$. Since
$d(a)\leq d(b)$ there is $y\in N(b)-N(a)$. Thus the path $axyb$ is the induced graph in $G$ by the two nonadjacent edges $ax$ and $by$, 
which is a subgraph of a square, a contradiction. Whence, $N(a)\subseteq N(b)$.
Finally, let $d_1<...<d_s$ be the list of distinct degrees of vertices of $S$. Set $A_i=\{a\in S; d(v)=d_i\}$ and
$X_i=\{x\in T;$ there is $a \in A_i$ such that $ax\in E(G)\}\backslash \displaystyle\bigcup _{j<i}X_j$. From these two families of sets we
can show that $G$ is an $s$ or $s+1$-generalized star.\\

$B\Rightarrow C$: Let $D$ be a digraph whose missing graph is $G$ and let $ab$ be a missing edge. Suppose, to the contrary, that $ab$ is not
good. Then there is $u,v\in V(D)-\{a,b\}$ such that $u\rightarrow a$, $b\notin N^{+}(u)\cup N^{++}(u)$, $v\rightarrow b$ and 
$a\notin N^{+}(v)\cup N^{++}(v)$. In this case, also $uv$ is a missing edge and not adjacent to $ab$. Clearly, These 2 missing
edges induce a subraph of a square. A contradiction. \\

$C\Rightarrow B$: Suppose to the contrary that there are two nonadjacent edges in $G$, say $xy$ and $uv$, that induce in $G$ a subgraph of square. 
We may assume without lose of generality that $xu$ and $yv$ are not in $E(G)$. We construct a digraph $D$ whose missing graph is $G$ and such that
$xy$ is not good as follows: $V(D)=V(G)$. For a vertex $w$ with $wu\notin E(G)$ (resp. $wv\notin E(G)$), $(w,u)\in E(D)$ (resp. $(w,v)\in E(D)$),
with exception when $w=x$ (resp. $w=y$), $(u,x)\in E(D)$ (resp. $(v,y)\in E(D)$). For any two nonadjacent vertices $w,t$ in $G$ both not in 
$\{u,v\}$, we give $wt$ any orientation to be in $E(D)$. By construction of $D$, $u\rightarrow x$, $y\notin N^{+}(u)\cup N^{++}(u)$, 
$v\rightarrow y$ and $x\notin N^{+}(v)\cup N^{++}(v)$. Whence, $xy$ is not a good missing edge of $D$. A contradiction.

\end{proof}

Now, theorems 2 and 3 imply the following statements.

\begin{cor}
 Every weighted digraph whose missing graph is a generalized star has a vertex with the weighted SNP.
\end{cor}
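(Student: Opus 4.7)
The plan is to derive Corollary 1 as a direct consequence of Theorem 2 and Theorem 3, without introducing any new machinery. The two theorems are set up precisely so that their combination yields the statement, so the proof should be essentially a one-line syllogism: take a weighted digraph whose missing graph is a generalized star, invoke the implication $(A)\Rightarrow(C)$ of Theorem 3 to conclude that every missing edge is good, and then apply Theorem 2 to obtain a vertex with the weighted SNP.

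More explicitly, first I would let $(D,\omega)$ be a weighted digraph and assume that its missing graph $G$ is a generalized star $G_n$ in the sense of Definition 2. Since $G$ satisfies property $(A)$ of Theorem 3, it satisfies property $(C)$, which says that for every digraph $D'$ whose missing graph is $G$, all missing edges of $D'$ are good. Applying this to our particular $D$, every missing edge of $D$ is good in the sense of Definition 1. Then Theorem 2, applied to $(D,\omega)$, gives a vertex $f$ of $D$ with the weighted SNP, namely $\omega(N^+(f))\le \omega(N^{++}(f))$.

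There is essentially no obstacle in this step: all the combinatorial work has already been absorbed into Theorem 3, whose proof exhibits the structural decomposition $V(G)=\bigcup_i(X_i\cup A_{i-1})$ and uses it to rule out an induced pair of nonadjacent edges inside a square, which in turn is exactly what is needed to guarantee that every missing edge is good. The only thing to double-check is that Corollary 1 is stated for digraphs in which the missing graph is \emph{exactly} a generalized star, so that Theorem 3 applies verbatim, and that Theorem 2 is invoked with the correct weight function $\omega$, which is the same weight function given on $D$. No reorientation or weight modification is needed beyond what Theorem 2 already performs internally.

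Finally, as the excerpt already notes, the corollaries for tournaments missing a sun, a star, or a complete graph follow at once because each of these graphs is a particular case of a generalized star (with small values of $n$ or with empty stable part), so no separate argument is required for those cases.
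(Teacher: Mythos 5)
Your proof is correct and matches the paper exactly: the paper derives this corollary as an immediate consequence of Theorem 2 together with the implication $(A)\Rightarrow(C)$ of Theorem 3, which is precisely your one-line syllogism. No further comment is needed.
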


\begin{cor}
 Every weighted digraph whose missing graph is a sun has a vertex with the weighted SNP.
\end{cor}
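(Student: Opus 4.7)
The plan is to derive Corollary 2 as a one-line consequence of Corollary 1, so the only substantive step is to recognise a sun as a generalized star. Write the sun $G$ as $T\cup S$, where $T$ is a clique, $S$ is a stable set, and $N(s)=V(T)$ for every $s\in S$. If $S=\emptyset$, then $G$ is simply a complete graph, which fits Definition 2 with $n=1$, $X_1=V(T)$, and $A_0=\emptyset$. Otherwise, $G$ matches the generalized-star framework sketched right after Definition 2, and is a $1$- or $2$-generalized star as emphasised there.

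To avoid any bookkeeping with the indices of Definition 2, my preferred route is to verify condition (B) of Theorem 3 directly: no two nonadjacent edges of $G$ induce a subgraph of a square. I would handle this with a short case analysis on a pair of disjoint edges $e_1,e_2$ of $G$: either both lie in $T$, or both are of the form $st$ with $s\in S$ and $t\in T$, or one lies in $T$ while the other crosses $S$ and $T$. In each case, since $T$ is complete and every $s\in S$ is adjacent to the whole of $T$, the four endpoints already span at least five edges of $G$, whereas any subgraph of the $4$-cycle has at most four. Hence no two nonadjacent edges of a sun induce a subgraph of a square.

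With condition (B) established, the equivalence in Theorem 3 identifies $G$ as a generalized star, and Corollary 1 immediately supplies a vertex of $(D,\omega)$ with the weighted SNP. Equivalently, one may invoke Theorem 3 to conclude that every missing edge of $D$ is good, and then apply Theorem 2. The only ``hard'' step is the trivial edge count in the case analysis, so there is no real obstacle; the proof is essentially the combination of Theorem 2, Theorem 3, and Corollary 1 applied to the very simple structure of a sun.
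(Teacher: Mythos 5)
Your proposal is correct, and its first paragraph is exactly the paper's route: the paper observes immediately after Definition 2 that a sun is a $1$- or $2$-generalized star and then derives this corollary from Corollary 1 (i.e.\ from Theorems 2 and 3) with no further argument. Your preferred alternative---verifying condition (B) of Theorem 3 directly by noting that the four endpoints of any two disjoint edges of a sun span at least five edges (six if any of the four lies in $T$ together with its partner, five in the $st$--$s't'$ case), while a subgraph of a square has at most four---is a sound and slightly more self-contained entry point into the same machinery, since it avoids matching the sun against the indexed sets $A_i, X_i$ of Definition 2; either way the conclusion follows from Theorem 3 together with Theorem 2 (equivalently, Corollary 1).
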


\begin{cor}
 Every weighted digraph whose missing graph is a star has a vertex with the weighted SNP.
\end{cor}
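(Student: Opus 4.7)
The plan is to deduce Corollary 3 directly from the machinery already in place, without carrying out any further combinatorial work. The first step is to observe that a star $K_{1,k}$ is a generalized star in the sense of Definition 2. Indeed, the author has already noted (in the paragraph following Definition 2) that when $V(T)$ is a singleton the sun reduces to a star, and that every sun is itself a 1- or 2-generalized star. Once the identification of $K_{1,k}$ with a generalized star is in place, Corollary 1 applies verbatim to give the desired vertex with the weighted SNP.

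A cleaner alternative, which avoids any bookkeeping with the $X_i$'s and $A_i$'s, is to verify property $(B)$ of Theorem 3 directly for a star. In $K_{1,k}$ every edge is incident to the unique center vertex, so any two distinct edges share a common endpoint; there are simply \emph{no} two nonadjacent edges. Consequently condition $(B)$ is vacuously satisfied. Applying the implication $(B)\Rightarrow(C)$ from Theorem 3, one concludes that in every digraph whose missing graph is a star all missing edges are good, and then Theorem 2 immediately supplies a vertex with the weighted SNP.

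The main obstacle is essentially nonexistent: all the substantive work is done in Theorems 2 and 3. The only conceptual point worth being careful about is to use the graph-theoretic notion of \emph{nonadjacent} edges (vertex-disjoint) when checking $(B)$, so that the vacuous verification for a star actually goes through. Given this, the proof of Corollary 3 is a one-line appeal to Corollary 1, or equivalently a two-line appeal to Theorems 2 and 3 via the vacuous case.
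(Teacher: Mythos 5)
Your proposal is correct and matches the paper's (implicit) argument: the paper derives Corollary 3 by noting that a star is a sun, hence a generalized star, and then invoking Theorems 2 and 3 exactly as in your first paragraph. Your alternative via the vacuous verification of condition (B) is a pleasant shortcut but uses the same machinery (Theorem 3's equivalence plus Theorem 2), so it is not a genuinely different route.
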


The proof of the non weighted version of the above fact appearing as Theorem 3.5 in \cite{fidler}
has a minor error.

\begin{cor}
 Every weighted digraph whose missing graph is a complete graph has a vertex with the weighted SNP.
\end{cor}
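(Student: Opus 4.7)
The plan is to deduce Corollary 4 from Corollary 1 by showing that a complete graph itself qualifies as a generalized star in the sense of Definition 2. Once this identification is made, no further weighted-order argument is required: the machinery already built up in Theorem 2 and Corollary 1 delivers a feed vertex with the weighted SNP directly.

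The first step is to check Definition 2 by hand for $G = K_m$. I would take $n = 1$, set $X_1 = V(K_m)$, and set $A_0 = \emptyset$. Condition (2) is then satisfied because $G[X_1] = K_m$ is complete and $X_1$ is nonempty; conditions (3) and (4) are vacuous, since the only index with a nonemptiness requirement is $i > 0$, and no such $A_i$ appears in the decomposition. This already identifies $K_m$ as a 1-generalized star, after which Corollary 1 applies.

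As an independent sanity check I would verify Theorem 3's property (B) directly for $K_m$: any two nonadjacent edges of $K_m$ span four distinct vertices, and the subgraph they induce in $K_m$ is the full $K_4$ on those four vertices, which has six edges and therefore cannot be embedded inside the four-edge cycle $C_4$. Hence (B) holds vacuously, and Theorem 3 supplies a second route to the same conclusion; one can then feed this into Theorem 2 in place of Corollary 1 without changing anything substantial.

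The only subtle point, and the step I would double-check, is reading condition (3) of Definition 2 carefully enough to see that $A_0$ is not required to be nonempty. Without this reading, one might instead try to present a complete graph as a degenerate sun and invoke Corollary 2, but exactly the same emptiness question for the stable set arises there, so nothing is avoided. Beyond this bookkeeping there is no real obstacle; the weighted SNP content is entirely carried by Proposition 1 and Theorem 2, which were already established in the earlier part of the paper.
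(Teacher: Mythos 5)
Your proposal is correct and follows essentially the same route as the paper, which obtains this corollary from Theorems 2 and 3 by observing that a complete graph is a sun with empty stable set and hence a (1-)generalized star; your explicit verification with $X_1=V(K_m)$, $A_0=\emptyset$ just spells out that identification. The only quibble is that condition (B) for $K_m$ holds not vacuously but because two nonadjacent edges induce a $K_4$, which is not a subgraph of a square.
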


The non weighted version of the above corollary was already proved in \cite{fidler}.

\end{section}

\textbf{Acknowledgement.}
I thank Pr. A. El Sahili for a useful discussion.

\end{document}